\documentclass[12pt]{article}
\usepackage[utf8]{inputenc}
\usepackage{amssymb}
\usepackage{amsmath}
\usepackage{amsthm}
\usepackage{comment}
\usepackage{enumerate}

\title{Nonvanishing cohomology classes on finite groups of Lie type with Coxeter number at most $p$}
\date{Spring 2014}
\author{David Sprehn}

\theoremstyle{definition}
\newtheorem{thm}{Theorem}
\newtheorem{prop}[thm]{Proposition}
\newtheorem{lemma}[thm]{Lemma}
\newtheorem{cor}[thm]{Corollary}
\newtheorem{defn}[thm]{Definition}

\newcommand{\im} {\operatorname{Im}}
\renewcommand{\hom} {\operatorname{Hom}}
\newcommand{\aut} {\operatorname{Aut}}

\newcommand{\hgt} {\operatorname{ht}}

\newcommand{\iso} {\approx}

\newcommand{\normal} {\triangleleft}
\newcommand{\semidirect} {\rtimes}

\newcommand{\inv} {\ensuremath ^{-1}}

\newcommand{\bbZ} {\ensuremath{\mathbb{Z}}}

\newcommand{\F} {\ensuremath{\mathbb{F}}}

\newcommand{\set}[2] {\left\lbrace {#1} \,\middle\arrowvert\, {#2} \right\rbrace}

\begin{document}
\maketitle

\begin{abstract}
We prove that the degree $r(2p-3)$ cohomology of any finite group of Lie type over $\F_{p^r}$, with coefficients in characteristic $p$, is nonzero as long as its Coxeter number is at most $p$.  We do this by providing a simple explicit construction of a nonzero element.
\end{abstract}

\section{Introduction}
We investigate the cohomology of the finite groups of Lie type over $\F_{p^r}$ with coefficients in characteristic $p$.  The most important examples are the general linear groups over finite fields, $GL_n(\F_{p^r})$.  The cohomology rings of these groups in characteristics other than $p$ were determined by Quillen~\cite{QuillenK} in the course of his algebraic K-theory computations, but the cohomology in characteristic $p$ remains elusive.  When $n\leq p$, we prove that the lowest nontrivial cohomology groups occur in degree $r(2p-3)$.  This extends a result of Bendel, Nakano, and Pillen~\cite{BNP} (which was valid for $n\leq p-2$).   Furthermore, we do so by providing a simple explicit construction, valid in all (untwisted) groups of Lie type $G$ with Coxeter number at most $p$, of a nonzero class in that degree.

More specifically, we provide an embedding of the invariants
\[ H^*(\F_{p^r};\F_p)^{\F_{p^r}^\times} \iso H^*(GL_2(\F_{p^r});\F_p) \]
into $H^*(G;\F_p)$.  Since these invariants are nonzero~\cite[Lem.~A.1]{FP} in degree $r(2p-3)$, this shows $H^{r(2p-3)}(G;\F_p)\neq0$.  In particular,
\begin{thm}
Suppose $n\leq p$.  Then
\[ H^{r(2p-3)}(GL_n(\F_{p^r});\F_p)\neq0. \]
\end{thm}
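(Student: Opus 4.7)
The plan is to deduce the theorem directly from the embedding $H^*(\F_{p^r};\F_p)^{\F_{p^r}^\times}\hookrightarrow H^*(G;\F_p)$ promised in the introduction, applied to $G=GL_n(\F_{p^r})$. Since $GL_n$ has Coxeter number $n$, the hypothesis $n\le p$ places us in the range where the embedding exists, and Lemma~A.1 of \cite{FP} supplies a nonzero element of the invariant ring in degree $r(2p-3)$; its image under the embedding is a nonzero class in $H^{r(2p-3)}(GL_n(\F_{p^r});\F_p)$.

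The real content therefore lies in constructing the embedding. The plan there is to identify inside $G$ a \emph{Borel-of-$GL_2$} subgroup $H\iso\F_{p^r}\semidirect\F_{p^r}^\times$, with the multiplicative group acting on the additive by scalar multiplication. In $G=GL_n$ one can take $H$ to consist of matrices of the form $\mathrm{diag}(a,1,\ldots,1)(I+cE_{1,n})$ with $a\in\F_{p^r}^\times$ and $c\in\F_{p^r}$; for a general untwisted Lie type $G$ one would use a highest-root subgroup together with the associated cocharacter of the maximal torus. Because $|\F_{p^r}^\times|$ is coprime to $p$, Hochschild--Serre collapses to give a natural identification $H^*(H;\F_p)\iso H^*(\F_{p^r};\F_p)^{\F_{p^r}^\times}$, and the task reduces to producing a section of the restriction $\operatorname{res}^G_H\colon H^*(G;\F_p)\to H^*(H;\F_p)$.

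The main obstacle, and where the Coxeter number hypothesis should be used, is constructing this section, i.e.\ lifting explicit invariant classes on $H$ to cocycles on $G$. The hypothesis $h\le p$ bounds the nilpotence class of the unipotent radical of a Borel of $G$ by $p-1$ (it equals $h-1$), so the relevant product formulas truncate cleanly, and one can hope to realize the standard generators $y_i$ of $H^1(\F_{p^r};\F_p)$, together with their Bocksteins $x_i=\beta(y_i)$, as pullbacks of cohomology classes on $G$ built from characters of the unipotent radical and suitable secondary cocycles. The candidate lift of the degree-$r(2p-3)$ invariant is then the monomial $y_1x_1^{p-2}\cdots y_rx_r^{p-2}$ assembled out of these pieces, and the two main technical checks are that this construction yields a well-defined class in $H^*(G;\F_p)$ and that its restriction to $H$ really is the invariant supplied by Lemma~A.1.
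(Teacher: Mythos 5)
Your first paragraph matches the paper's reduction exactly: granting an embedding of $H^*(\F_{p^r};\F_p)^{\F_{p^r}^\times}$ into $H^*(GL_n(\F_{p^r});\F_p)$, the theorem follows from the nonvanishing of the invariants in degree $r(2p-3)$ \cite[Lem.~A.1]{FP}. But the embedding is where all the content lies, and there your proposal stops at a plan rather than a proof: you explicitly defer ``the two main technical checks'' --- that your candidate cocycle lifts give well-defined classes on $G$ and that they restrict correctly to $H$ --- and those checks \emph{are} the theorem. Constructing a section of $\operatorname{res}^G_H$ by writing down explicit cocycles on $G$ in degrees up to $r(2p-3)$ is a genuinely hard problem (it would require detailed control of the unknown cohomology of $G$), and nothing in the sketch explains how the truncation of commutator formulas coming from nilpotence class $h-1\leq p-1$ would actually produce such cocycles. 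As written, this is a gap, not a proof.

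For comparison, the paper avoids lifting along restriction entirely by going in the other direction: the map into $H^*(G;\F_p)$ is the pullback $(H^*(X_s;\F_p))^T=H^*(B/U_s;\F_p)\to H^*(B;\F_p)$ followed by the \emph{transfer} $H^*(B;\F_p)\to H^*(G;\F_p)$, so no cocycle on $G$ is ever exhibited. Injectivity is then verified by restricting to a rank-$r$ elementary abelian subgroup $A$ all of whose nontrivial elements are regular unipotent: the double coset formula reduces $i^*\circ\operatorname{tr}$ to the single term coming from the unique fixed point of $A$ on $G/B$, so the composite is just restriction $H^*(B)\to H^*(A)$, which factors through $H^*(U/U_s)$ as an injection followed by an isomorphism. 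The hypothesis $n\leq p$ (Coxeter number at most $p$) enters not through truncating a cocycle computation but through forcing $U$ to have exponent $p$, so that Testerman's theorem on $A_1$-overgroups applies to a regular unipotent element and yields the subgroup $A$. If you want to salvage your outline, replacing the hoped-for section of restriction by this transfer construction is the missing idea.
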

In the case $r=1$, the above invariants contain nonvanishing classes in degrees $2p-3$ and $2p-2$, connected by the Bockstein homomorphism.  Hence, for all $n\leq p$, $H^*(GL_n(\F_p);\F_p)$ also contains such classes, answering a conjecture of Barbu~\cite{Barbu} who constructed a class in degree $2p-2$ and conjectured it to be a Bockstein.  We remark that this class in $H^{2p-2}(GL_n(\F_p);\F_p)$ may also be viewed as a Chern class of the permutation representation of $GL_n(\F_p)$ on the set $\F_p^n-\{0\}$.

For $G=GL_n(\F_{p^r})$, Friedlander and Parshall~\cite{FP} proved that the cohomology of $G$ vanishes below degree $r(2p-3)$, i.e.~the upper bound for vanishing given here is sharp.  It is not sharp in general for groups of Lie type.  However, Bendel, Nakano, and Pillen~\cite{BNP,BNP2} have shown that the bound $r(2p-3)$ is sharp in many cases, including the simply-laced, adjoint type groups of Coxeter number less than $p$, and the simply-connected groups of types $A_n$ ($n\geq 4$), $E_6$, $E_7$, and $E_8$, with sufficiently large $p$.  Furthermore, in simply-connected type $A_{n-1}$ with $5\leq n\leq (p-1)/2$ they showed the lowest cohomology group $H^{r(2p-3)}(SL_n(\F_{p^r});\F_p)$ to be one-dimensional, so our nonvanishing class is in fact a generator in these cases.  It follows that the same result holds for $GL_n(\F_{p^r})$.  Also in simply-connected type $C_n$ (i.e.~$Sp(2n,\F_q)$) with $n<p/2$, they showed that the lowest nonvanishing cohomology $H^{r(p-2)}(Sp(2n,\F_q);\F_p)$ is one-dimensional, so the nonvanishing class we shall construct there is again a generator.

The author would like to thank Steve Mitchell, his thesis advisor, for many wonderful discussions.

\section{Notation for finite groups of Lie type}
Fix $q=p^r$.  We investigate the cohomology of finite groups of Lie type over $\F_q$: those groups which arise as the set of fixed-points of a $q$-Frobenius map $F$ acting on a connected reductive algebraic group.

Accordingly, let $\bar G$ be a connected reductive algebraic group over $\bar\F_q$.  Let $F:\bar G\to\bar G$ be a {\bf $q$-Frobenius map}.  That is,\footnote{Other authors refer to these as ``standard’’ or ``untwisted’’ Frobenius maps.} a map obtained from the endomorphism $F_q((a_{ij}))=(a_{ij}^q)$ of $GL_N(\bar\F_q)$ via some closed embedding $\bar G\leq GL_N(\bar\F_q)$ whose image is preserved by $F_q$.  Any such map is surjective (since $\bar G$ is connected).  The (finite) fixed-point group $\bar G^F$ of a $q$-Frobenius map is called a {\bf finite group of Lie type} over $\F_q$.

For discussion of the following notions, see Carter~\cite{Carter}.
Fix a choice of maximal torus and Borel subgroup $\bar T\leq\bar B\leq\bar G$ which are preserved by $F$.  Let $\bar U$ be the unipotent radical of $\bar B$.
Write $G=\bar G^F$, and similarly $B=\bar B^F$, $T=\bar T^F$, $U=\bar U^F$.
We remark that $B=U\semidirect T$ and $N=N_{\bar G}(\bar T)^F$ form a split BN-pair of characteristic $p$ in $G$, and that $U\leq G$ is $p$-Sylow subgroup.

Let $W=N/T$ be the Weyl group of $G$, $S$ be its set of Coxeter generators, and $\Phi$ be the associated root system with base (determined by $B$) 
\[ \Delta=\set{\alpha_s}{s\in S} \]
and positive roots $\Phi^+$.  For $\alpha\in\Phi$, denote by $X_\alpha$ the corresponding root subgroup of $G$.  Thus a root subgroup $X_\alpha$ is contained in $U$ if and only if $\alpha$ is positive, and $U$ is generated by the positive root subgroups.  In particular, for $s\in S$, we shall abbreviate
\[ X_s=X_{\alpha_s}=U\cap s\inv w_0\inv Uw_0s \]
(where $w_0\in W$ is the longest element).  Each root subgroup is isomorphic to $\F_q$, and is normalized by $T$,
with the action being $\F_q$-linear.
Also define
\[ U_s=U\cap s\inv Us=\prod_{\alpha\in\Phi^+-\{\alpha_s\}}X_\alpha, \]
a normal subgroup of $U$;
the composite
\[ X_s\to U\to U/U_s \]
is an isomorphism.

Lastly, each $\alpha\in\Phi^+$ can be written
\[ \alpha=\sum_{s\in S} c_s\alpha_s \]
with the $c_i$ nonnegative integers; we define the height of $\alpha$ to be
\[ \hgt(\alpha) = \sum_{s\in S}c_s \in\bbZ^+ \]
and the Coxeter number of $G$ to be $h=\max\set{\hgt(\alpha)+1}{\alpha\in\Phi^+}$.

The reader may wish to keep in mind the case $G=GL_n(\F_{p^r})$, in which $n$ is the Coxeter number, $B$ (resp.~$U$) the group of invertible (resp.~unipotent) upper triangular matrices, $T$ the diagonal matrices,
$X_s$ the subgroups
\[ X_k=\set{(a_{ij})\in GL_n(\F_{p^r})}{a_{ij}=\delta_{ij}\mbox{ unless }(i,j)=(k,k+1)}, \]
and $U_s$ the subgroups
\[ U_k=\set{(a_{ij})\in U}{a_{k,k+1}=0}. \]

\section{Commuting regular unipotents}
Our goal is to provide an injection (Theorem~\ref{injection}) from the $T$-invariant cohomology of a root subgroup $X_s$ to the cohomology of $G$, by composing the pullback to $H^*(B;\F_p)$ with the transfer map up to $H^*(G;\F_p)$.  We will verify that the composite is injective by then restricting to a certain elementary abelian $p$-subgroup consisting of regular unipotent elements.  This section is devoted to showing the existence of such a subgroup (Corollary~\ref{ptorus}).

We remark that, in the case $G=GL_n(\F_{p^r})$, this section may be bypassed and Corollary~\ref{ptorus} proved by constructing the required subgroup directly as the (elementary abelian) subgroup generated by the matrices
\[ I+\lambda \begin{bmatrix}
0 & 1 & & \\ & \ddots & \ddots & \\ & & & 1 \\ & & & 0
\end{bmatrix}, \]
where $\lambda$ ranges over a choice of basis for $\F_{p^r}$ over $\F_p$.

\begin{defn}
For our purposes, an element $x\in G$ is unipotent if it lies in a conjugate of $U$, or equivalently, its order is a power of $p$.  Furthermore, $x$ is {\bf regular unipotent} in $G$ if its action on $G/B$ has a unique fixed point.  Since $N_G(B)=B$, this is equivalent to lying in a unique conjugate of $B$.
\end{defn}
There is of course a corresponding notion for an element of the algebraic group: a unipotent element $x\in\bar G$ is {\bf regular unipotent} in $\bar G$ if its action on $\bar G/\bar B$ has a unique fixed point, or equivalently, $x$ lies in a unique Borel subgroup.

For an element $x\in G$ of the finite group, then, there are two notions of being regular unipotent.  However, they coincide.  This follows from bijectivity~\cite[sec.~1.17]{Carter} of the map
\[ G/B=\bar G^F/\bar B^F\to (\bar G/\bar B)^F, \]
together with the observation that, when a regular unipotent in $\bar G$ is fixed by $F$, the (unique) Borel subgroup containing it is preserved by $F$.

The set of regular unipotent elements in $G$ is clearly preserved by conjugacy in $G$.  In fact, the above discussion shows that it is preserved by conjugacy in $\bar G$.

One can show (using the fact that each $U_s\normal U$):
\begin{lemma}  Let $x\in U$.
\begin{enumerate}[(a)]
\item $x$ is regular unipotent if and only if $x\notin U_s$ for all $s\in S$;
\item There exist regular unipotent elements in $G$.
\end{enumerate}
\end{lemma}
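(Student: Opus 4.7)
Part (a) has an elementary forward direction and a more delicate reverse direction, and (b) follows from (a) by explicit construction. For the forward direction, suppose $x \in U_s$. By definition $U_s = U \cap s\inv U s \subseteq s\inv B s$, so $x$ lies in both $B$ and $s\inv B s$. These Borels are distinct: if $s\inv B s = B$ then $s \in N_G(B) = B$, forcing $s \in B \cap N = T$, contradicting $s$ being a nontrivial Coxeter generator. Hence $x$ is contained in at least two Borels and is not regular unipotent.

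For the reverse direction, suppose $x \in U$ has nontrivial image in each quotient $U/U_s \iso X_s$; I would show that $x \in gBg\inv$ forces $g \in B$. By the Bruhat decomposition $G = \bigsqcup_{w \in W} BwB$, write $g = b n$ with $b \in B$ and $n \in N$ representing $w \in W$; then $gBg\inv = b(nBn\inv)b\inv$. Since each $U_s$ is $B$-invariant (using $U_s \normal U$ together with the fact that $T$ normalizes each root subgroup), replacing $x$ by $b\inv x b$ preserves both $x \in U$ and $x \notin U_s$ for every $s$, reducing the problem to showing that $n\inv x n \in B$ forces $w = 1$. I would induct on $\ell(w)$: for $\ell(w) \geq 1$, pick $s \in S$ with $\ell(sw) = \ell(w) - 1$, decompose $x = u \cdot x_{\alpha_s}$ with $u \in U_s$ and $x_{\alpha_s} \in X_{\alpha_s} \setminus \{1\}$, and compute that $s\inv x s$ picks up a nontrivial factor in the negative root subgroup $X_{-\alpha_s}$ (since $s U_s s\inv = U_s$ but $s X_{\alpha_s} s\inv = X_{-\alpha_s} \not\subseteq B$). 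Further conjugation by a representative of $w' = sw$ must then preserve this ``out of $B$'' character, yielding the desired contradiction.

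For part (b), by (a) it suffices to exhibit $x \in U \setminus \bigcup_{s \in S} U_s$. The bijection $\prod_{\alpha \in \Phi^+} X_\alpha \to U$ (for any fixed ordering) together with $U_s = \prod_{\alpha \neq \alpha_s} X_\alpha$ shows that membership in $U_s$ is detected by the vanishing of the $X_{\alpha_s}$-factor; so I would take $x = \prod_{s \in S} x_s$ with each $x_s \in X_s \setminus \{1\}$, which is possible since $X_s \iso \F_q$ and $q \geq 2$. The main obstacle is the inductive step in the reverse direction of (a): tracking how the $X_{-\alpha_s}$-component introduced by conjugation by $s$ interacts with subsequent conjugations by shorter Weyl elements requires invoking the commutator relations among root subgroups and carefully bookkeeping which positive roots become negative under the remaining reflections. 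A cleaner alternative I would keep in mind is to invoke Steinberg's characterization of regular unipotents in the algebraic group $\bar G$ and transfer the conclusion to $G$ via the $F$-fixed-point identification already recalled in the paragraph preceding the lemma.
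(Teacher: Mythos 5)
The paper itself offers no proof of this lemma --- it only notes that the key input is $U_s\normal U$ and points to Carter --- so your write-up has to stand on its own. The forward direction of (a) and part (b) are correct and complete: $U_s\subseteq B\cap s\inv Bs$ with $s\inv Bs\neq B$ handles one implication, and for (b) the element $\prod_{s\in S}x_s$ with $x_s\in X_s\setminus\{1\}$ has nontrivial image in each quotient $U/U_s$ (the quotient map kills every factor except $x_s$), hence lies in no $U_s$. The genuine gap is the reverse direction of (a), and you flag it yourself: the induction on $\ell(w)$ is not closed. The problem with the scheme as set up is that after one conjugation the element $s\inv x s$ lands in $U_sX_{-\alpha_s}$, outside $U$, so the inductive hypothesis (``nontrivial image in every $U/U_s$'') no longer applies to it, and controlling how the $X_{-\alpha_s}$-piece propagates under further conjugation genuinely requires the commutator calculus you are trying to avoid.

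The standard way to close this step replaces the induction by one application of the BN-pair intersection formula. If $x\in U$ lies in $gBg\inv$, Bruhat gives $gB=bnB$ with $b\in B$ and $n$ representing $w\in W$, so (after replacing $x$ by $b\inv xb$, legitimate since $U,U_s\normal B$) one has
\[ x\in U\cap nBn\inv=U\cap nUn\inv=\prod_{\alpha\in\Phi^+\cap\, w\Phi^+}X_\alpha. \]
If $w\neq1$ there is some $s\in S$ with $w\inv\alpha_s<0$, i.e.\ $\alpha_s\notin w\Phi^+$, so the right-hand side omits $X_{\alpha_s}$ and is therefore contained in $U_s$ --- contradicting $x\notin U_s$. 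Hence $w=1$, $g\in B$, and $B$ is the unique conjugate of $B$ containing $x$. Your base case computation ($s\inv xs\in U_sX_{-\alpha_s}$ with nontrivial second factor, and $U_sX_{-\alpha_s}\cap B=U_s$) is exactly the $\ell(w)=1$ instance of this; the displayed identity is what lets you skip the bookkeeping for longer $w$. Your fallback of quoting the characterization of regular unipotents in $\bar G$ and descending via $G/B\to(\bar G/\bar B)^F$ would also work, and is closer in spirit to the paper's reference to Carter.
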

\noindent There are other ways to characterize the regular unipotents: see~\cite[Prop.~5.1.3]{Carter}.

We wish to show (Corollary~\ref{ptorus}) that, when the Coxeter number of $G$ is at most $p$, there exists an elementary abelian $p$-subgroup of rank $r$, in which every nontrivial element is regular unipotent.  (Recall $q=p^r$.)  This follows from two facts:
\begin{enumerate}
\item When the Coxeter number of $G$ is at most $p$, every nontrivial unipotent element has order $p$.
\item When $x\in G$ has order $p$, its $\bar G$-conjugacy class (plus the identity) contains an elementary abelian $p$-subgroup of rank $r$.
\end{enumerate}

The former statement is no doubt familiar, but we include an elementary proof in the appendix (Proposition~\ref{exponent}).  We shall show the latter using Testerman's Theorem on $A_1$-overgroups:

\begin{thm}[Testerman~\cite{Testerman}]
Let $\bar G$ be a semisimple algebraic group over an algebraically closed field $k$ of nonzero characteristic $p$.  Assume $p$ is a good prime\footnote{This means that $p$ does not divide the coefficients of any root $\alpha\in\Phi^+$ when expressed in the simple root basis.}
for $\bar G$.  Let $\sigma$ be a surjective endomorphism of $\bar G$ with finite fixed-point subgroup.  Let $u\in\bar G^\sigma$ with $u^p=1$.  Then there exists a closed connected subgroup $X$ of $\bar G$ with $\sigma(X)\leq X$, $u\in X$, and $X$ isomorphic to $SL_2(k)$ or $PSL_2(k)$.
\end{thm}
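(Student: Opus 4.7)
The plan is to first construct, without regard to $\sigma$, some closed connected subgroup $X \leq \bar G$ isomorphic to $SL_2$ or $PSL_2$ containing $u$, and then separately arrange for $\sigma$-stability using a fixed-point argument.

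For the first step, the natural starting point is an algebraic-group version of the Jacobson--Morozov construction. In characteristic zero one takes the nilpotent $\log u \in \operatorname{Lie}(\bar G)$, extends it to an $\mathfrak{sl}_2$-triple $\{H,X,Y\}$, and integrates to get the required subgroup. The obstruction in characteristic $p$ is that weights of $\operatorname{ad}$ can exceed $p$, destroying the exponential. The good prime hypothesis is precisely what makes the weight analysis behave as in characteristic $0$. I would first reduce to the case where $u$ is \emph{distinguished} (centralizer unipotent) by restricting to the Levi factor of a parabolic associated to $C_{\bar G}(u)^\circ$; for distinguished unipotents in good characteristic one can build the sought $A_1$-subgroup more directly by an adaptation of Dynkin's classification. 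The hypothesis $u^p = 1$ is what allows the formal exponential of the $\mathfrak{sl}_2$-action to integrate to an honest morphism from $SL_2$ or $PSL_2$ into $\bar G$ (the ambiguity between $SL_2$ and its adjoint quotient depending on whether the coroot $H$ lifts). Closedness and connectedness of the image follow from general facts about morphisms of algebraic groups.

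For the second step, I would consider the variety $\mathcal{X}_u$ of all closed connected $A_1$-overgroups of $u$ in $\bar G$. By Step 1, $\mathcal{X}_u$ is nonempty, and $\sigma$ acts on it. The key rigidity input I would want is that $C_{\bar G}(u)^\circ$ acts transitively on $\mathcal{X}_u$, so that $\mathcal{X}_u$ is a connected homogeneous space for $C_{\bar G}(u)^\circ$. Granting this, $\sigma$ restricts to a surjective endomorphism of $C_{\bar G}(u)^\circ$ with finite fixed subgroup (inherited from the hypothesis on $\sigma$ acting on $\bar G$), and Lang--Steinberg provides a $\sigma$-fixed point of $\mathcal{X}_u$, namely a $\sigma$-stable $A_1$-overgroup of $u$.

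The main obstacle is clearly Step 1: exporting Jacobson--Morozov to the modular setting requires genuine use of the good prime hypothesis to keep $\operatorname{ad}$-weights bounded by $p$, and a careful reduction to distinguished unipotents so that the integration $\mathfrak{sl}_2 \to \operatorname{Lie}(\bar G)$ lifts to $SL_2 \to \bar G$. Step 2 is comparatively routine, provided the transitivity statement on $\mathcal{X}_u$ is available as a black box from the theory of unipotent classes in good characteristic.
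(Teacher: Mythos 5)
A point of orientation first: the paper does not prove this statement at all --- it is Testerman's theorem, imported as a black box from Testerman~\cite{Testerman}, whose actual proof is a long case-by-case construction running through the Bala--Carter classification of unipotent classes, with the $\sigma$-stable $A_1$-subgroup built explicitly out of Chevalley generators in each case. So there is no proof in the paper to compare yours against; what you have written is an outline for attacking Testerman's theorem itself, and it should be judged on those terms.

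As such an outline, it contains one step that is genuinely false, not merely incomplete. The ``key rigidity input'' of your Step 2 --- that $C_{\bar G}(u)^\circ$ acts transitively on the set of \emph{all} closed connected $A_1$-overgroups of $u$ --- does not hold. For example, take $\bar G=SL_4$ and $u$ of Jordan type $(2,2)$. The subgroups $\set{\operatorname{diag}(x,x)}{x\in SL_2}$ and $\set{\operatorname{diag}(x,F_p(x))}{x\in SL_2}$ (the graph of the standard Frobenius, which fixes the relevant unipotent matrix since its entries are $0$ and $1$) are both closed connected subgroups isomorphic to $SL_2$ containing $u$, yet they are not conjugate in $\bar G$: they act on $k^4$ with composition factors $V\oplus V$ and $V\oplus V^{[1]}$ respectively. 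To salvage Step 2 you must restrict to a distinguished class of overgroups (Seitz's ``good $A_1$'' subgroups, those whose associated cocharacter matches the weighted Dynkin diagram of $u$), and the statement that these form a single orbit under $C_{\bar G}(u)^\circ$ is itself a substantial theorem of Seitz from 2000 --- postdating Testerman's paper, so it cannot be the route she takes, and it is certainly not available as a routine black box. Meanwhile Step 1, as you yourself concede, is essentially the entire content of the theorem: there is no logarithm in characteristic $p$, and carrying out a Jacobson--Morozov/exponentiation program under only the hypotheses ``$p$ good and $u^p=1$'' is precisely the explicit construction Testerman performs; the Lang--Steinberg reduction in Step 2 relieves you of none of that work. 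The proposal is therefore a research plan with a false lemma at its hinge, not a proof.
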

We begin by extending Testerman's theorem to reductive groups.
\begin{thm}\label{Testerman}
Let $\bar G$ be a connected reductive algebraic group over an algebraically closed field $k$ of nonzero characteristic $p$.  Assume $p$ is a good prime for $\bar G$.  Let $\sigma$ be a surjective endomorphism of $\bar G$ with finite fixed-point subgroup.  Let $u\in\bar G^\sigma$ with $u^p=1$.  Then there exists a closed connected subgroup $X$ of $\bar G$ with $\sigma(X)\leq X$, $u\in X$, and $X$ isomorphic to $SL_2(k)$ or $PSL_2(k)$.
\end{thm}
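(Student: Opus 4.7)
The plan is to reduce to Testerman's theorem (already stated, for semisimple $\bar G$) by passing to the derived subgroup $\bar G'=[\bar G,\bar G]$, which is connected and semisimple, and then verifying that all hypotheses transfer.

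First, I would observe that $u\in\bar G'$. Since $\bar G$ is connected reductive, the quotient $\bar G/\bar G'$ is a torus, and a torus contains no nontrivial elements of $p$-power order, so the image of $u$ there is trivial; hence $u$ already lies in $\bar G'$.

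Next, I would verify the hypotheses of Testerman's theorem for the triple $(\bar G',\sigma|_{\bar G'},u)$. The derived subgroup is characteristic and hence $\sigma$-stable; it is connected semisimple by the standard structure theory of reductive groups. The notion of ``good prime'' depends only on the root system, which is identical for $\bar G$ and $\bar G'$, so $p$ is good for $\bar G'$. The fixed-point subgroup $(\bar G')^\sigma$ is contained in $\bar G^\sigma$, hence finite. For surjectivity of $\sigma|_{\bar G'}$, I would invoke the standard fact that a surjective endomorphism of a connected algebraic group with finite fixed-point subgroup (that is, a Steinberg endomorphism) is bijective on $k$-points; then $\sigma(\bar G')$ is a closed connected subgroup of $\bar G'$ of the same dimension, so equals $\bar G'$.

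Finally, Testerman's theorem applied to $(\bar G',\sigma|_{\bar G'},u)$ produces a closed connected subgroup $X\leq\bar G'\leq\bar G$ with $\sigma(X)\leq X$, $u\in X$, and $X\iso SL_2(k)$ or $PSL_2(k)$, as desired. The only real obstacle in this reduction is the bookkeeping surrounding surjectivity of $\sigma|_{\bar G'}$, which rests on the bijectivity of Steinberg endomorphisms on $k$-points; every other hypothesis transfers transparently from $\bar G$ to $\bar G'$.
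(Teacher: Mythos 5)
Your proposal is correct and follows essentially the same route as the paper: pass to the derived subgroup $\bar G'$, which is closed, connected, semisimple, $\sigma$-stable, contains $u$, has the same root system (so $p$ stays good), and on which $\sigma$ remains surjective by the closed-image/dimension argument, then apply Testerman's theorem. The only cosmetic difference is that you justify $u\in\bar G'$ via the quotient torus having no $p$-torsion, where the paper simply notes that $\bar G'$ contains all unipotent elements of $\bar G$.
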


\begin{proof}
Since $\bar G$ is connected reductive, its derived subgroup $\bar G'$ is closed~\cite[sec.~2.3]{Borel}, connected and semisimple; it is preserved by $\sigma$.  We claim the restriction of $\sigma$ to $G’$ remains surjective.  This is because its image is a closed subgroup of $G’$ with the same dimension.  Furthermore, $G’$ contains all the unipotent elements of $\bar G$, including $u$; it also has the same root system.  Therefore, Testerman's theorem applies, producing the desired subgroup $X$.
\end{proof}

Using this, we will obtain:
\begin{prop}\label{conj}
Let $\bar G$ be a connected reductive group with a $p^r$-Frobenius map $F$.  Assume $p$ is a good prime for $\bar G$.  If $u\in\bar G^F$ has order $p$, then there exists an elementary abelian $p$-subgroup of rank $r$ in $\bar G^F$, containing $u$, all of whose nontrivial elements are conjugate in $\bar G$.
\end{prop}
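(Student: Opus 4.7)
The plan is to use Theorem~\ref{Testerman} to find an $F$-stable rank-one subgroup of $\bar G$ containing $u$, and then take the desired elementary abelian subgroup to be the $F$-fixed points of the unipotent radical of a Borel of that subgroup.

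First I apply Theorem~\ref{Testerman} with $\sigma=F$: since $u^p=1$ and $p$ is good for $\bar G$, we obtain a closed connected $F$-stable subgroup $X\leq\bar G$ with $u\in X$ and $X$ isomorphic to $SL_2(\bar\F_q)$ or $PSL_2(\bar\F_q)$. Being nontrivial and of order $p$, $u$ is a nontrivial unipotent element of $X$; in $SL_2$ or $PSL_2$ every such element is regular unipotent (it has a unique fixed point on the one-dimensional flag variety), so $u$ lies in a unique Borel subgroup $\bar B_X\leq X$. Since $F(u)=u$ and $F(\bar B_X)$ is another Borel of $X$ containing $u$, uniqueness forces $F(\bar B_X)=\bar B_X$.

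Next, let $\bar U_X\normal \bar B_X$ be the unipotent radical. It is $F$-stable, isomorphic to $\mathbb{G}_a$ as an algebraic group, contains $u$, and all its nontrivial elements are regular unipotent in $X$ and therefore $X$-conjugate (hence $\bar G$-conjugate) to one another. It remains to verify that $\bar U_X^F$ has rank exactly $r$ over $\F_p$. Since $F$ arises from the standard Frobenius $(a_{ij})\mapsto(a_{ij}^q)$ on some $GL_N(\bar\F_q)$ containing $\bar G$, its restriction to the closed $F$-stable subvariety $\bar U_X$ is again a $q$-Frobenius, so $\bar U_X$ inherits an $\F_q$-structure; as a one-dimensional connected smooth unipotent algebraic group over the perfect field $\F_q$, it is necessarily isomorphic to $\mathbb{G}_a$ over $\F_q$, giving $\bar U_X^F\cong \mathbb{G}_a(\F_q)=(\F_{p^r},+)$, elementary abelian of rank $r$. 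The subgroup $\bar U_X^F\leq\bar G^F$ then satisfies all the requirements.

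The main obstacle is this last step about the rank, which requires both that the restriction of a $q$-Frobenius to a closed invariant subvariety remains a $q$-Frobenius (with the same $q$, not $p^s$ for $s\neq r$), and that one-dimensional connected unipotent groups over $\F_q$ are all $\mathbb{G}_a$. Once these are granted, the rest is immediate from the elementary structure of $SL_2$.
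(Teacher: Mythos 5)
Your proof is correct and follows the paper's strategy: apply Theorem~\ref{Testerman} to get an $F$-stable $SL_2$ or $PSL_2$ containing $u$, then take the $F$-fixed points of the unipotent radical of an $F$-stable Borel of that subgroup. The one place you diverge is the step you yourself flag as the main obstacle, computing $|\bar U_X^F|=q$: the paper has a lemma (Lemma~\ref{Ga}) prepared for exactly this purpose, arguing that any injective endomorphism of $\mathbb{G}_a$ has the form $u\mapsto cu^{p^d}$ and that the exponent $d$ is forced to equal $r$ by compatibility with the ambient standard Frobenius on $GL_N$, whence the fixed points are the $q$ roots of the separable additive polynomial $cu^{q}-u$. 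You instead invoke descent: the restriction of the standard $q$-Frobenius to a closed stable subvariety endows it with an $\F_q$-structure, and every form of $\mathbb{G}_a$ over a perfect field is trivial, so $\bar U_X^F\cong\mathbb{G}_a(\F_q)$. Both routes are valid; the paper's is more elementary and self-contained, while yours leans on standard (but heavier) facts about $\F_q$-structures and forms of unipotent groups. A small point in your favor: you explicitly verify that the unique Borel of $X$ containing the regular unipotent $u$ is $F$-stable, so that the constructed subgroup really contains $u$ as the proposition demands; the paper's proof passes over this in silence.
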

For the finite groups of Lie type $\bar G^F$ with Coxeter number at most $p$, this proposition applies (by Proposition~\ref{exponent}) to the regular unipotent elements:
\begin{cor}\label{ptorus}
Let $G$ be a nontrivial\footnote{I.e.~positive rank.} group of Lie type over $\F_{p^r}$ with Coxeter number at most $p$.  There exists an elementary abelian $p$-subgroup of rank $r$ in $G$, all of whose nontrivial elements are regular unipotent.
\end{cor}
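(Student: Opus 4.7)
The plan is to combine three ingredients assembled in this section: the Lemma stating that regular unipotents exist in $G$ and are characterized by avoiding each $U_s$, Proposition~\ref{exponent} (in the appendix) bounding the exponent of unipotent elements under the Coxeter-number hypothesis, and Proposition~\ref{conj} producing elementary abelian subgroups of rank $r$ inside a single $\bar G$-conjugacy class.

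First I would apply part (b) of the lemma to fix some regular unipotent $x\in G$; this uses that $G$ has positive rank, since otherwise $U$ is trivial. Because the Coxeter number of $G$ is at most $p$, Proposition~\ref{exponent} guarantees $x^p=1$. Next, to feed $x$ into Proposition~\ref{conj}, I need to know that $p$ is a good prime for $\bar G$. This is automatic from $h\leq p$ by a short case check on the type: in the classical types the only candidate bad prime is $2$, which is strictly less than $h$ whenever $h\geq 3$; and in the exceptional types $G_2,F_4,E_6,E_7,E_8$ the bad primes lie in $\{2,3,5\}$, all well below the respective Coxeter numbers $6,12,12,18,30$.

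With the good-prime hypothesis verified, Proposition~\ref{conj} applied to $u=x$ yields an elementary abelian $p$-subgroup $E\leq \bar G^F=G$ of rank $r$ that contains $x$ and whose nontrivial elements are all $\bar G$-conjugate to $x$. To finish, I would invoke the observation made just after the definition of regular unipotence, namely that the set of regular unipotent elements in $G$ is preserved by $\bar G$-conjugacy. Since $x$ is regular unipotent, every nontrivial element of $E$ is too, which is exactly what is required.

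I do not expect any real obstacle; the argument is a direct assembly of the preceding results. The only place that needs even minor care is the verification that $h\leq p$ forces $p$ to be a good prime, and that is a finite type-by-type inspection rather than a substantive difficulty.
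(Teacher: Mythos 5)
Your argument is correct and is exactly the paper's (largely implicit) proof: take a regular unipotent, note it has order $p$ by Proposition~\ref{exponent}, feed it into Proposition~\ref{conj}, and use that regular unipotence is preserved by $\bar G$-conjugacy. You even supply the type-by-type verification that $h\leq p$ forces $p$ to be a good prime, which the paper only asserts in passing.
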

We point out that the assumption of $p$ being a good prime is automatic when $G$ has Coxeter number at most $p$.

In order to prove the proposition, we need the following lemma on Frobenius maps of a one-dimensional additive group.  It may be viewed as showing how to recover $q$ from a $q$-Frobenius map.

\newpage
\begin{lemma}\label{Ga}
Let $k$ be an algebraically closed field of characteristic $p$.\begin{enumerate}
\item Let $\phi:k[x]\to k[x]$ be determined by $\phi(x)=cx^{p^d}$ for some $c\in k^\times$ and $d\geq0$.  If $p(x)$ is any nonconstant polynomial such that $\phi(p(x))=p(x)^{p^e}$, then $d=e$.
\item Let $X\leq GL_N(k)$ be a (closed) subgroup isomorphic to $\mathbb{G}_a(k)$ which is preserved by the standard Frobenius map $F_{p^r}$.  Then (as finite groups)
\[ X^{F_{p^r}}\iso C_p^r. \]
\end{enumerate}
\end{lemma}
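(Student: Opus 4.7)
Part (1) follows by directly comparing degrees.  Since $\phi$ is the $k$-algebra endomorphism of $k[x]$ sending $x\mapsto cx^{p^d}$, we have $\phi(p(x)) = p(cx^{p^d})$, a polynomial of degree $p^d\cdot\deg p$, whereas $p(x)^{p^e}$ has degree $p^e\cdot\deg p$; nonconstancy of $p$ then forces $d = e$.

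For Part (2), the plan is to transport $F_{p^r}$ through the isomorphism $\iota\colon\mathbb{G}_a\to X$, obtaining a group endomorphism $\psi\colon\mathbb{G}_a\to\mathbb{G}_a$ with $\iota\circ\psi = F_{p^r}\circ\iota$.  This $\psi$ is an additive polynomial in $k[t]$, and $X^{F_{p^r}}$ corresponds bijectively to the fixed-point set $\{t\in k:\psi(t)=t\}$.  Two pieces of information will suffice to count these.

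First, writing $\iota(t) = (p_{ij}(t))$ as a polynomial matrix and comparing $\iota\circ\psi = F_{p^r}\circ\iota$ componentwise gives $p_{ij}(\psi(t)) = p_{ij}(t)^{p^r}$; choosing a nonconstant $p_{ij}$ (which exists since $\iota$ is an embedding) and comparing degrees yields $\deg\psi = p^r$.  Second, picking any polynomial $Q$ in the matrix entries restricting to $\iota^{-1}$ on $X$, we have $\psi(t) = Q\bigl(p_{ij}(t)^{p^r}\bigr)$; raising each coefficient of $Q$ to its $p^r$-th root (available by perfectness of $k$) produces a polynomial $Q'$ satisfying $Q'(y)^{p^r} = Q(y^{p^r})$, and so $\psi(t) = R(t)^{p^r}$ where $R(t) := Q'(p_{ij}(t))$.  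In particular, $\psi'(t) = 0$ identically.

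It follows that $\psi(t) - t$ has derivative $-1$, hence is separable with $\deg\psi = p^r$ distinct roots in the algebraically closed $k$.  These roots form a subgroup of $(k,+)$ of order $p^r$, and since every nonzero element has additive order $p$, the subgroup is elementary abelian of rank $r$, giving $X^{F_{p^r}}\cong C_p^r$.  The main obstacle is the $p^r$-th power factorization $\psi = R^{p^r}$, which requires both a polynomial inverse for $\iota$ and the perfectness of $k$; the degree computation and fixed-point count are then routine.
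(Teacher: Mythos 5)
Your argument is correct, but it takes a genuinely different route from the paper's (admittedly terse) one. The paper observes that $\psi=\iota^{-1}\circ F_{p^r}\circ\iota$ is an \emph{injective} endomorphism of $\mathbb{G}_a$, invokes the standard classification of such endomorphisms to write $\psi(u)=cu^{p^d}$, and then uses part (1) applied to the coordinate functions $p_{ij}$ to conclude $d=r$; the fixed-point equation $cu^{p^r}=u$ is then visibly separable and its $p^r$ solutions form the required $C_p^r$. You avoid the classification entirely: your degree count $\deg\psi=p^r$ is the same computation as part (1), but in place of the normal form you prove directly that $\psi$ is a $p^r$-th power, hence $\psi'=0$, which is exactly what is needed for $\psi(t)-t$ to be separable. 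That is a real gain in self-containedness, and the factorization $\psi=R^{p^r}$ is doing necessary work: additivity alone gives only $\psi(t)=a_0t+\sum_{i\ge1}a_it^{p^i}$, and if $a_0$ happened to equal $1$ then $\psi(t)-t$ would be inseparable, so one must rule this out somehow. The only point to polish is that $\iota^{-1}$ is a priori a regular function on the closed subvariety $X\le GL_N$, i.e.\ the restriction of an element of $k[a_{ij},\det^{-1}]$ rather than of a polynomial in the matrix entries alone; this is harmless either because $X\iso\mathbb{G}_a$ is unipotent so $\det\equiv1$ on $X$, or because $\det\bigl((a_{ij}^{p^r})\bigr)=\det\bigl((a_{ij})\bigr)^{p^r}$ lets your coefficient-root trick go through for $\det^{-1}$ as well.
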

The first statement is easily checked, and the second follows after observing that any injective endomorphism of $\mathbb{G}_a$ must have the form $u\mapsto cu^{p^d}$ for some $c\in k^\times$ and $d\geq0$.

\begin{proof}[Proof of Proposition~\ref{conj}]
A $p^r$-Frobenius map $F$ is surjective with finitely many fixed points.  Then by Theorem~\ref{Testerman}, we know that $u$ is contained in a closed $F$-invariant subgroup isomorphic to either $SL_2(\bar\F_p)$ or $PSL_2(\bar\F_p)$.  In these groups, every element of order $p$ is conjugate.  Furthermore, the unipotent radical $U$ of an $F$-invariant Borel subgroup is isomorphic to $\mathbb{G}_a$, hence by Lemma~\ref{Ga} its $F$-invariants form an elementary abelian $p$-subgroup of rank $r$, all of whose nontrivial elements are conjugate in $\bar G$.
\end{proof}

\section{Cohomology}
Now we have all the ingredients in place for our theorem on cohomology of $G$.
Recall that $q=p^r$; $G,T,B,U,S$ are, respectively, a finite group of Lie type, a maximal torus, a Borel containing $T$, its unipotent radical, and the set of Coxeter generators for the Weyl group of $G$.  For each $s\in S$, $X_s$ is the corresponding simple root subgroup, which is isomorphic to $\F_q$.

\begin{thm}\label{injection}
Let $G$ be a finite group of Lie type over $\F_q$ having Coxeter number at most $p$.  For each $s\in S$, there is an injective graded vector space map from $(H^*(X_s;\F_p))^T$ to $H^*(G;\F_p)$.  It is given by composing the pullback $H^*(X_s\semidirect T;\F_p)\to H^*(B;\F_p)$ with transfer $H^*(B;\F_p)\to H^*(G;\F_p)$.  Moreover, it is a module homomorphism over the Steenrod algebra.
\end{thm}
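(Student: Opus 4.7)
My plan is to verify the composite is a Steenrod-module map, which is essentially formal, and then prove injectivity by restricting to the elementary abelian $p$-subgroup $E$ provided by Corollary~\ref{ptorus} and applying Mackey's double coset formula. Since $|T|$ is coprime to $p$, we have $H^*(X_s \semidirect T; \F_p) \iso H^*(X_s; \F_p)^T$, and the quotient $B \twoheadrightarrow B/U_s \iso X_s \semidirect T$ gives the inflation into $H^*(B; \F_p)$. Inflation is a ring map and transfer commutes with Steenrod operations, so the composite with the transfer to $H^*(G; \F_p)$ is a Steenrod-module homomorphism.

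For injectivity, I first arrange $E \leq B$. The group $E$ from Corollary~\ref{ptorus} is abelian, so it lies in the centralizer of any nontrivial $e \in E$. A standard fact about reductive groups is that the centralizer of a regular unipotent sits inside its unique Borel; thus $E$ lies in one Borel, and after $G$-conjugation I may assume $E \leq B$.

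Now Mackey's formula reads
\[ \operatorname{res}^G_E \circ \operatorname{tr}^G_B = \sum_{g \in E \backslash G / B} \operatorname{tr}^E_{E \cap gBg\inv} \circ \operatorname{res}^{gBg\inv}_{E \cap gBg\inv} \circ c_g, \]
where $c_g$ denotes conjugation by $g$. The crucial observation is that $E \cap gBg\inv = \{1\}$ whenever $g \notin B$: a nontrivial element would be a regular unipotent lying in both $B$ and $gBg\inv$, contradicting uniqueness of the Borel containing a regular unipotent. The corresponding Mackey term $\operatorname{tr}^E_{\{1\}} \circ \operatorname{res}^{gBg\inv}_{\{1\}} \circ c_g$ then vanishes: restriction to the trivial subgroup kills positive-degree classes, and on the degree-$0$ class the transfer multiplies by $|E| \equiv 0 \pmod{p}$. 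So the Mackey sum collapses to the single term from $g = 1$, yielding $\operatorname{res}^G_E \circ \operatorname{tr}^G_B = \operatorname{res}^B_E$.

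Finally I must check $\operatorname{res}^B_E \circ \operatorname{inf}$ is injective. This is pullback along $E \hookrightarrow B \twoheadrightarrow X_s \semidirect T$, which factors through $E \to X_s \hookrightarrow X_s \semidirect T$. The first arrow sends every nontrivial $e$ (a regular unipotent, hence not in $U_s$) to a nontrivial class in $X_s = U/U_s$; being an injective $\F_p$-linear map between vector spaces of common dimension $r$, it is an isomorphism. Under $H^*(X_s \semidirect T) \iso H^*(X_s)^T$, the induced map on cohomology is the inclusion $H^*(X_s)^T \hookrightarrow H^*(X_s) \iso H^*(E)$, which is injective. The technical core is the Mackey collapse, driven by the centralizer argument placing $E$ inside $B$; once that is in hand, the rest proceeds formally.
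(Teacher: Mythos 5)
Your proposal is correct and follows essentially the same route as the paper: collapse the double coset formula for $\operatorname{res}^G_E\circ\operatorname{tr}^G_B$ to $\operatorname{res}^B_E$ using the regular-unipotent property of $E$, then identify $\operatorname{res}^B_E\circ\operatorname{inf}$ with the injection $(H^*(X_s))^T\hookrightarrow H^*(X_s)\iso H^*(E)$ via the isomorphism $E\to U/U_s$. The only cosmetic differences are that you conjugate $E$ into $B$ via the centralizer of a regular unipotent (Sylow conjugation into $U$ suffices) and that you show the off-diagonal intersections $E\cap gBg\inv$ are trivial rather than merely proper, whereas the paper invokes the vanishing of transfers from proper subgroups of an elementary abelian group; both are valid.
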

We remark that the last sentence follows because transfer maps commute with the Steenrod operations~\cite{Evens}, as do maps induced on cohomology by group homomorphisms.

\begin{proof}
Let $A$ be the elementary abelian $p$-subgroup consisting of regular unipotent elements, whose existence is guaranteed by Corollary~\ref{ptorus}; we may assume $A\leq U$.  Fix an $s\in S$, and consider the composition
\[ A\to U\to U/U_s. \]
The composition is injective, because no regular unipotent lies in $U_s$.  Hence it is an isomorphism because $U/U_s\iso X_s\iso\F_q\iso A$ as groups.  We shall use this fact below.

Recall that $U_s\normal B$.  Because $B$ is generated by $T$, $U_s$, and $X_s$, the quotient is
\[ B/U_s=X_s\semidirect T. \]
For the remainder of this proof, all cohomology is with $\F_p$ coefficients, which are suppressed from the notation.
Now consider the map on cohomology given by
\[ (H^*(X_s))^T=H^*(B/U_s)\to H^*(B)\xrightarrow{\operatorname{tr}}
H^*(G)\xrightarrow{i^*} H^*(A). \]
The first map is induced by the quotient homomorphism, the second is the transfer map, and the third restriction.  We wish to show that this composition is injective, from which the claims in the theorem follow.
The double coset formula expresses the composition $i^*\circ\operatorname{tr}$ as a sum indexed over $A\backslash G/B$.  Since $A$ is elementary abelian, all of the terms vanish except those corresponding to the fixed points of $A$ on $G/B$.  But as $A$ contains regular unipotents, $B$ is the only such fixed point.  Therefore the composition $i^*\circ\operatorname{tr}$ equals the restriction map $H^*(B)\to H^*(A)$.
Hence the above composition is
\[ H^*(B/U_s)\to H^*(B)\to H^*(A), \]
which in turn equals
\[ H^*(B/U_s)\to H^*(U/U_s)\to H^*(U)\to H^*(A). \]
But the first map is injective since $p$ does not divide the index, while the composition of the other two is an isomorphism, as remarked at the start of this proof.  Hence the map in question is injective as desired.
\end{proof}

\begin{cor}\label{nonvanishing}
Let $G$ be a finite group of Lie type over $\F_q$ having Coxeter number at most $p$, which is nontrivial (of positive rank).  Then $H^*(G;\F_p)$ contains $H^*(GL_2(\F_q);\F_p)$, as a (graded) submodule over the Steenrod algebra. In particular,
\[ H^{r(2p-3)}(G;\F_p)\neq0. \]
\end{cor}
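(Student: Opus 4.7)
The plan is to invoke Theorem~\ref{injection} and then identify (or embed) $H^*(GL_2(\F_q);\F_p)$ into the source of its injection.  Since $G$ has positive rank, the set $S$ is nonempty; pick any $s\in S$.  Theorem~\ref{injection} then supplies an injective map of graded modules over the Steenrod algebra
\[ \Phi\colon(H^*(X_s;\F_p))^T \hookrightarrow H^*(G;\F_p). \]
So the job is to realize $H^*(GL_2(\F_q);\F_p)$ as a Steenrod-submodule of the source.

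Next I would observe that $X_s\iso\F_q$ as an additive group, and that $T$ acts on $X_s$ by $\F_q$-linear automorphisms through the simple-root character $\alpha_s\colon T\to\F_q^\times$.  In particular, $T$ acts through a subgroup of $\F_q^\times$, so
\[ H^*(\F_q;\F_p)^{\F_q^\times} \subseteq (H^*(X_s;\F_p))^T, \]
and the left-hand side is closed under Steenrod operations since these commute with the $\F_q^\times$-action.  Composing with $\Phi$ yields a Steenrod-module embedding $H^*(\F_q;\F_p)^{\F_q^\times}\hookrightarrow H^*(G;\F_p)$.

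To finish the first claim I would invoke the standard identification $H^*(\F_q;\F_p)^{\F_q^\times}\iso H^*(GL_2(\F_q);\F_p)$ recalled in the introduction.  Concretely, the Borel subgroup of $GL_2(\F_q)$ has the form $\F_q\semidirect(\F_q^\times)^2$, with $(\F_q^\times)^2$ acting on $\F_q$ surjectively onto $\F_q^\times$ via $(a,d)\mapsto a/d$; since $(\F_q^\times)^2$ has order prime to $p$, a Hochschild--Serre argument gives $H^*(B_2;\F_p)=H^*(\F_q;\F_p)^{\F_q^\times}$, and a stable-elements check (the only other Borel is $U_2^-\semidirect T_2$, and $U_2\cap U_2^-=1$ makes the relevant double-coset condition vacuous) gives $H^*(GL_2(\F_q);\F_p)\iso H^*(B_2;\F_p)$.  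The ``in particular'' assertion is then immediate from \cite[Lem.~A.1]{FP}, which provides a nonzero class of $H^*(\F_q;\F_p)^{\F_q^\times}$ in degree $r(2p-3)$.

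I expect the only real obstacle to be the bookkeeping for the $GL_2$-identification above.  However, for the numerical nonvanishing assertion alone, this identification is unnecessary: the Friedlander--Parshall lemma directly supplies a nonzero class in $H^{r(2p-3)}(\F_q;\F_p)^{\F_q^\times}$, whose image under the embedding just constructed is a nonzero class in $H^{r(2p-3)}(G;\F_p)$.
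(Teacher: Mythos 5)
Your proposal is correct and follows essentially the same route as the paper: pick any $s\in S$, note that $T$ acts on $X_s\iso\F_q$ through a subgroup of $\F_q^\times$ so that $H^*(\F_q;\F_p)^{\F_q^\times}$ sits inside $(H^*(X_s;\F_p))^T$ as a Steenrod submodule, apply Theorem~\ref{injection}, and quote~\cite[Lem.~A.1]{FP} for nonvanishing in degree $r(2p-3)$. The only difference is that you spell out the identification $H^*(GL_2(\F_q);\F_p)\iso H^*(\F_q;\F_p)^{\F_q^\times}$ (correctly, via the Borel of $GL_2$ and stable elements), which the paper simply takes as known from the introduction.
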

\begin{proof}
Pick any $s\in S$.  By the theorem, it suffices to express
$H^*(GL_2(\F_q);\F_p) = (H^*(\F_q;\F_p))^{\F_q^\times}$ as such a submodule of $(H^*(X_s;\F_p))^T$.  It is, since $X_s\iso\F_q$ with $T$ acting $\F_q$-linearly.
\end{proof}

In one case, namely the simply-connected groups in type $C$, there is a choice of $s$ for which
\[ \im(T\to\aut(X_s))<\F_q^\times \]
is a proper subgroup; that is, $T$ does not act transitively on the nonzero elements of $X_s$.  Here we get a stronger result than that of Corollary~\ref{nonvanishing}.
\begin{cor}
Suppose $2n\leq p$.  Then $H^*(Sp(2n,\F_q);\F_p)$ contains $H^*(SL_2(\F_q);\F_p)$, as a (graded) submodule over the Steenrod algebra. In particular,
\[ H^{r(p-2)}(Sp(2n,\F_q);\F_p)\neq0. \]
\end{cor}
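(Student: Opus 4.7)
The plan is to apply Theorem~\ref{injection} to $G = Sp(2n, \F_q)$ with $s \in S$ chosen so that $\alpha_s$ is the long simple root, and then to recognize the resulting invariants as containing $H^*(SL_2(\F_q); \F_p)$. The first step is a root-system calculation. Writing the maximal torus as
\[ T = \{\mathrm{diag}(t_1, \dots, t_n, t_n\inv, \dots, t_1\inv) : t_i \in \F_q^\times\}, \]
the long simple root is $\alpha_s = 2\epsilon_n$, so $T$ acts on $X_s$ through the character $(t_1, \dots, t_n) \mapsto t_n^2$. Hence the image of $T$ in $\aut(X_s) = \F_q^\times$ is the proper subgroup of squares $(\F_q^\times)^2$. (If $p = 2$ then $2n \leq 2$ forces $n = 1$, making $Sp(2, \F_q) = SL_2(\F_q)$ and the statement trivial, so I may assume $p$ odd.)

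Next I would identify the $T$-invariants with the cohomology of a Borel of $SL_2(\F_q)$. Because $|T|$ is coprime to $p$, Lyndon--Hochschild--Serre applied to $X_s \semidirect T$ gives
\[ (H^*(X_s; \F_p))^T \iso (H^*(\F_q; \F_p))^{(\F_q^\times)^2}. \]
Meanwhile, a Borel $B_0 = \F_q \semidirect \F_q^\times \leq SL_2(\F_q)$ has $\F_q^\times$ acting on $\F_q$ by $t \cdot x = t^2 x$, so the same invariant ring computes $H^*(B_0; \F_p)$. Since $[SL_2(\F_q) : B_0] = q+1$ is coprime to $p$, restriction gives an injective map of Steenrod modules $H^*(SL_2(\F_q); \F_p) \hookrightarrow H^*(B_0; \F_p)$. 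Composing with the Theorem~\ref{injection} embedding yields the chain
\[ H^*(SL_2(\F_q); \F_p) \hookrightarrow (H^*(X_s; \F_p))^T \hookrightarrow H^*(Sp(2n, \F_q); \F_p), \]
which is the desired Steenrod-submodule containment.

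For the nonvanishing $H^{r(p-2)}(Sp(2n, \F_q); \F_p) \neq 0$, in view of the chain above it suffices to show that $(H^*(\F_q; \F_p))^{(\F_q^\times)^2}$ is nonzero in degree $r(p-2)$, i.e.~the analog for the squares subgroup of the nonvanishing cited from \cite{FP}. When $r = 1$ this is witnessed by the explicit class $\xi \eta^{(p-3)/2} \in H^{p-2}(\F_p; \F_p)$, which is $(\F_p^\times)^2$-invariant under the action $\xi, \eta \mapsto t\xi, t\eta$; for general $r$ an analogous class exists via the K\"unneth decomposition $H^*(\F_q; \F_p) \iso H^*(\F_p; \F_p)^{\otimes r}$. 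The only real calculation is the root-system identification of the $T$-action on the long-root subgroup; the rest reduces to standard Lyndon--Hochschild--Serre, transfer, and invariant-theoretic computations. I expect the main obstacle to be exhibiting the low-degree $(\F_q^\times)^2$-invariant explicitly for general $r$, which is essentially a re-run of the appendix argument in \cite{FP} for the index-$2$ subgroup.
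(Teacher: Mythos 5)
Your proposal is correct and follows essentially the same route as the paper: choose the long simple root so that $T$ acts on $X_s$ through the squares $(\F_q^\times)^2$, identify the resulting invariants with the cohomology of a Borel of $SL_2(\F_q)$, and exhibit the explicit invariant class in degree $r(p-2)$. The only (harmless) difference is that you settle for an injection $H^*(SL_2(\F_q);\F_p)\hookrightarrow H^*(B_0;\F_p)$ via the coprime-index transfer argument, whereas the paper asserts the invariants actually equal $H^*(SL_2(\F_q);\F_p)$; your weaker statement suffices for both claims since the nonvanishing is read off from the middle term of your chain.
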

\begin{proof}
We may assume $p$ is odd.  Let $\alpha_s$ be the long simple root.  Then the action of $T$ on $X_s\iso\F_q$ factors through
$2\cdot\F_q^\times$, the subgroup of squares.  (This may be computed directly; or, see the next section.)  The invariants of $H^*(\F_q;\F_p)$ by this group are $H^*(SL_2(\F_q);\F_p)$, and are nonvanishing\footnote{In the notation of~\cite[Lem.~A.1]{FP}, the generator is\newline
$a_{1,2}^0\wedge\cdots\wedge~a_{1,2}^{r-1}\otimes~(b_{1,2}^0)^{(p-3)/2}\cdots(b_{1,2}^{r-1})^{(p-3)/2}$. }
in degree $r(p-2)$.
\end{proof}
In the next section, we show that the groups $Sp(2n,\F_q)$ are essentially the only case in which this happens; that is, in all other cases the action map $T\to\F_q^\times$ on every root subgroup is surjective.

\section{Root action surjectivity}
Let $\bar G$ be a connected reductive group and $F:\bar G\to \bar G$ a $q$-Frobenius map.
Let $\bar T\leq\bar G$ be a maximal torus split over $\F_q$ and
\[ \Phi\subset\chi(\bar T)=\hom(\bar T,\bar\F_q^\times) \]
be its root system.  Let $\bar\alpha\in\Phi$.  Consider the induced map on finite groups
\[ \alpha:T\to\F_q^\times \]
of $T$ on $X_\alpha$.  In this section, we investigate the question of when the invariants $(H^*(X_\alpha;\F_p))^T$ of $T$ acting on a root subgroup can be bigger than the ``universal invariants''
$(H^*(\F_q;\F_p))^{\F_q^\times}$,
which would require that $\alpha$ fails to be surjective.

First of all, we consider $\bar T$ in isolation.  Let $\bar T$ be an algebraic torus split over $\F_q$.  The following lemma, easy to check in coordinates, shows when an arbitrary character fails to induce a surjective map on the finite groups.

\begin{lemma}
Let $n|(q-1)$ and $\bar\alpha\in\chi(\bar T)$.
Then \[ n|[\F_q^\times:\im(\alpha)] \]
if and only if $\bar\alpha$ is divisible\footnote{In general, we say that an element $x$ of an abelian group $A$ is divisible by $n\in\bbZ$ if there exists $y\in A$ with $x=n\cdot y$.  We say that $x$ is divisible if it is divisible by some $n>1$.} by $n$ in $\chi(\bar T)$.
\end{lemma}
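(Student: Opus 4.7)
The plan is to reduce to coordinates using the hypothesis that $\bar T$ is $\F_q$-split, and then the statement becomes an elementary fact about cyclic groups.

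First I would choose an isomorphism of $\F_q$-algebraic groups $\bar T\iso\mathbb{G}_m^d$, where $d=\dim\bar T$; such an isomorphism exists because $\bar T$ is split over $\F_q$, and it carries the Frobenius $F$ to the standard coordinate-wise $q$-power map. Passing to $F$-fixed points yields $T\iso(\F_q^\times)^d$, and at the same time $\chi(\bar T)\iso\bbZ^d$, where a tuple $(a_1,\ldots,a_d)$ corresponds to the character $(t_1,\ldots,t_d)\mapsto \prod t_i^{a_i}$. Under this identification, $\bar\alpha$ is divisible by $n$ in $\chi(\bar T)$ precisely when $n\mid a_i$ for every $i$, i.e.\ when $n\mid\gcd(a_1,\ldots,a_d)$.

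Next I would compute $\im(\alpha)$ inside $\F_q^\times$. Fix a generator $g$ of the cyclic group $\F_q^\times$ (of order $q-1$). Then
\[ \im(\alpha)=\set{g^{a_1k_1+\cdots+a_dk_d}}{k_i\in\bbZ}, \]
so $\im(\alpha)$ is the unique subgroup of $\F_q^\times$ of order $(q-1)/\gcd(a_1,\ldots,a_d,q-1)$. Therefore
\[ [\F_q^\times:\im(\alpha)]=\gcd(a_1,\ldots,a_d,q-1). \]

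Finally, using the hypothesis $n\mid (q-1)$, I would observe that $n\mid\gcd(a_1,\ldots,a_d,q-1)$ is equivalent to $n\mid a_i$ for every $i$, which by the first step is exactly divisibility of $\bar\alpha$ by $n$ in $\chi(\bar T)$. This gives both implications at once.

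There is really no substantive obstacle: once the splitting is used to identify $\chi(\bar T)\iso\bbZ^d$ and $T\iso(\F_q^\times)^d$ compatibly, everything reduces to the standard computation of the image of an endomorphism of a cyclic group. The only mild subtlety is making sure the map on $\F_q$-points corresponds to the same tuple $(a_1,\ldots,a_d)$ as the character on the algebraic group, which is immediate from the functoriality of the split identification.
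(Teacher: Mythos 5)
Your proof is correct and is precisely the coordinate check that the paper leaves to the reader (the paper offers no written proof, only the remark that the lemma is ``easy to check in coordinates''). Identifying $\bar T\iso\mathbb{G}_m^d$ via the split structure, reading off $[\F_q^\times:\im(\alpha)]=\gcd(a_1,\ldots,a_d,q-1)$, and using $n\mid(q-1)$ to conclude is exactly the intended argument.
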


Now return to the situation where $\bar T$ is a maximal $\F_q$-split torus in a connected reductive group $\bar G$, of which $\bar\alpha$ is a root.  The lemma says that $\alpha$ is surjective unless $\bar\alpha$ is divisible in $\chi(\bar T)$ by some integer dividing $q-1$.  Since no root may be divisible (by any integer greater than one) in the root lattice, this immediately proves:

\begin{cor}
If $\bar G$ has adjoint type (i.e.~$\chi(\bar T)=\bbZ\Phi$), then $\alpha$ is surjective for every root $\bar\alpha$.
\end{cor}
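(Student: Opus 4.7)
The plan is to deduce the corollary as an essentially immediate consequence of the preceding lemma, once one observes that roots are indivisible in the root lattice. In the adjoint case $\chi(\bar T)=\bbZ\Phi$, so the lemma says $\alpha$ fails to be surjective precisely when $\bar\alpha$ is divisible in $\bbZ\Phi$ by some integer $n>1$ that also divides $q-1$. Hence it suffices to show the purely root-theoretic statement: no root is divisible by any $n>1$ in $\bbZ\Phi$.

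For that step I would use that the simple roots $\{\alpha_s:s\in S\}$ form a $\bbZ$-basis of $\bbZ\Phi$, so that for $\bar\alpha=\sum_s c_s\alpha_s$, divisibility of $\bar\alpha$ by $n$ in the root lattice amounts to $n\mid c_s$ for every $s$. The cleanest uniform way to see that $\gcd_s(c_s)=1$ is to invoke the Weyl group $W$: it stabilizes $\bbZ\Phi$, and every root is $W$-conjugate to some simple root of the same length. A simple root $\alpha_t$ is manifestly indivisible (its coefficient at itself is $1$), and since divisibility in the $W$-stable lattice $\bbZ\Phi$ is a $W$-invariant property, the same indivisibility transfers to every root.

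The step I would regard as the main obstacle is the $W$-transitivity on roots of each given length, but this is a standard fact about irreducible root systems (see, e.g.,~\cite{Carter}) and reduces for reducible $\Phi$ to the irreducible case by decomposing. With that input in hand, the corollary is just an assembly of the preceding lemma with this classical ingredient.
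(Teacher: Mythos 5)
Your argument is correct and follows essentially the same route as the paper: combine the preceding lemma with the fact that no root is divisible by any integer $n>1$ in the root lattice $\bbZ\Phi$. The paper simply asserts that indivisibility fact, whereas you justify it via $W$-conjugacy of an arbitrary root to a simple root (which is manifestly indivisible in the basis $\Delta$ of $\bbZ\Phi$); that is a valid and standard argument.
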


Furthermore, the lemma shows that surjectivity of $\alpha$ can never fail unless $\bar\alpha$ is divisible in the (larger) weight lattice.  But this weaker condition still never occurs except in one case:

\begin{lemma}
Let $\Phi$ be a (reduced, crystallographic) root system, and $\Lambda\geq\bbZ\Phi$ the associated weight lattice.  Then no root $\bar\alpha\in\Phi$ is divisible in $\Lambda$, unless (an irreducible component of) $\Phi$ has type $C_n$, in which case a long root $\bar\alpha$ is divisible by 2.
\end{lemma}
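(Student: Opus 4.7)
The plan is to reduce to the irreducible case and then use the Weyl group together with the Cartan matrix to read off all possible divisibilities. First I would decompose $\Phi$ as an orthogonal disjoint union of its irreducible components $\Phi_1,\ldots,\Phi_k$, with $\Lambda=\Lambda_1\oplus\cdots\oplus\Lambda_k$ correspondingly; any divisibility question for a root $\bar\alpha\in\Phi_j$ in $\Lambda$ then reduces to the same question in $\Lambda_j$, so I may assume $\Phi$ is irreducible.

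Next I would observe that since the Weyl group $W$ preserves both $\Lambda$ and $\bbZ\Phi$, divisibility in $\Lambda$ is a $W$-invariant property, and because $W$ acts transitively on roots of a given length, it suffices to test one simple root of each length class. Working in the basis $\{\omega_i\}$ of fundamental weights dual to the simple coroots, one has the expansion
\[ \alpha_j = \sum_i \langle \alpha_j,\alpha_i^\vee\rangle\,\omega_i, \]
so $\alpha_j$ is divisible by $n$ in $\Lambda$ if and only if $n$ divides every entry of row $j$ of the Cartan matrix. Since the diagonal entry $\langle\alpha_j,\alpha_j^\vee\rangle=2$, the only candidate is $n=2$.

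The off-diagonal Cartan integers take values in $\{0,-1,-2,-3\}$: the value $-1$ occurs for same-length neighbors or when $\alpha_j$ is short with a long neighbor, while $-2$ (resp.\ $-3$) forces $\alpha_j$ to be long and $\alpha_i$ short, joined by a double (resp.\ triple) bond in the Dynkin diagram. Hence for row $j$ to be entirely even, $\alpha_j$ must be long and every neighbor of $\alpha_j$ in the Dynkin diagram must be a short root joined to it by a double bond. Inspecting the irreducible Dynkin diagrams, the only simple root satisfying this is the terminal long root $\alpha_n$ in type $C_n$: types $A_n,D_n,E_6,E_7,E_8$ admit no length distinction; in $B_n$ and $F_4$ every long simple root has at least one long neighbor (contributing a $-1$); and in $G_2$ the long root has a triple bond (contributing $-3$).

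The main obstacle is just this type-by-type verification, but it reduces to inspecting the rank-$2$ connected subdiagrams incident to a candidate long simple root, which is routine. Once that is in hand, the conclusion is immediate: in type $C_n$, row $n$ of the Cartan matrix is $(0,\ldots,0,-2,2)$, so $\alpha_n = 2(\omega_n-\omega_{n-1})$ is divisible by $2$ in $\Lambda$; in every other irreducible type no row is entirely even, so no root is divisible in $\Lambda$.
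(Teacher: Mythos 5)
Your argument is correct and is essentially the paper's: the paper likewise reduces divisibility of a root in $\Lambda$ to divisibility by $m$ of a column (your row) of the Cartan matrix, and then cites Bourbaki's tables for the type-by-type verification that you carry out by hand via the Dynkin diagram. The only imprecision is at the low-rank coincidences: in $A_1=C_1$ the unique simple root has no neighbors, so its row $(2)$ is entirely even and $\bar\alpha=2\omega$ is divisible by $2$, and likewise the long simple root of $B_2=C_2$ has only a short, double-bonded neighbor; both are instances of the lemma's $C_n$ exception under the standard identifications, but your blanket dismissals of types $A_n$ and $B_n$ should be restricted to $n\geq 2$ and $n\geq 3$ respectively.
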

Indeed, for a (simple) root to be divisible by $m>1$ in $\Lambda$ is equivalent to the Cartan matrix for $\Phi$ containing a column which is divisible by $m$.  This only occurs in type $C_n$, in the case of the long simple root, and $m=2$~\cite[ch.~6]{Bourbaki}.

Since there are only two simple groups of Lie type $C_n$ (the simply-connected one and the adjoint one), the preceding three results show:

\begin{thm}
Let $G$ be a simple group of Lie type over $\F_q$ other than a symplectic group (i.e.~the simple simply-connected group of type $C_n$).  Then the action of $T$ on each root subgroup $X_\alpha$ of $G$ induces a surjective map $T\to\F_q^\times$.
\end{thm}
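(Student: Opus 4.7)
The plan is to combine the three preceding results by observing that the character lattice $\chi(\bar T)$ of the maximal torus always lies between the root lattice $\bbZ\Phi$ and the weight lattice $\Lambda$, and then to rule out divisibility of roots in this intermediate lattice.

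More concretely, I would proceed as follows. First, by the initial lemma of the section, surjectivity of $\alpha:T\to\F_q^\times$ fails precisely when $\bar\alpha$ is divisible in $\chi(\bar T)$ by some integer $n>1$ with $n\mid q-1$. Since $\chi(\bar T)\leq\Lambda$, any such divisibility in $\chi(\bar T)$ forces divisibility in $\Lambda$. By the second lemma of the section, the only root system in which some root is divisible in the weight lattice is $C_n$, where a long root is divisible by $2$ (and only by $2$). So outside of type $C_n$, no root of $\bar G$ is divisible in $\Lambda$, hence none is divisible in $\chi(\bar T)$, and $\alpha$ is surjective for every root. This handles every simple $G$ whose root system is not of type $C_n$.

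It remains to consider simple groups of type $C_n$. Here the weight-to-root lattice quotient $\Lambda/\bbZ\Phi$ has order $2$, so there are exactly two lattices strictly between (and including) $\bbZ\Phi$ and $\Lambda$, corresponding to the two isogeny classes: the adjoint group (with $\chi(\bar T)=\bbZ\Phi$) and the simply-connected group $Sp(2n)$ (with $\chi(\bar T)=\Lambda$). In the adjoint case, the adjoint corollary already gives surjectivity. In the simply-connected case, the long root is genuinely divisible by $2$ in $\chi(\bar T)=\Lambda$, so surjectivity can and does fail (when $2\mid q-1$, i.e.~$p$ odd), which is why this case is excluded from the statement. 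Thus among simple groups of type $C_n$, only $Sp(2n,\F_q)$ is the exceptional case, as claimed.

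The entire argument is a direct chain of implications from the results already proved in the section; I do not anticipate any real obstacle beyond the bookkeeping point of ensuring that for a simple algebraic group the character lattice of a maximal torus does lie in the interval $[\bbZ\Phi,\Lambda]$ and that in type $C_n$ the only two possibilities are the two endpoints. This is standard structure theory and can be cited from Bourbaki along with the Cartan matrix computation already referenced.
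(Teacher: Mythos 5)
Your proposal is correct and follows essentially the same route as the paper: the paper likewise chains the divisibility lemma for characters, the adjoint-type corollary, and the lemma locating divisible roots only in type $C_n$, and then observes that the only two simple groups of type $C_n$ are the adjoint one and the symplectic one. You have merely spelled out the bookkeeping (the inclusion $\bbZ\Phi\leq\chi(\bar T)\leq\Lambda$ and the index-$2$ quotient in type $C_n$) that the paper leaves implicit.
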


\section{Appendix: exponent of $U$}
We will show:
\begin{prop}\label{exponent}
In a finite group of Lie type over $\F_{p^r}$ with Coxeter number at most $p$, the Sylow $p$-subgroup $U$ has exponent $p$.
\end{prop}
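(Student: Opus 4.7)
The plan is to show that $U$ is nilpotent of class less than $p$, and then to deduce from this, together with the fact that $U$ is generated by elements of order $p$, that $U$ has exponent $p$.

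For the first step, I would introduce the height filtration $U^{(k)} = \langle X_\alpha : \hgt(\alpha) \geq k \rangle$ for $k \geq 1$. The Chevalley commutator relations say that for non-proportional $\alpha, \beta \in \Phi^+$, the commutator $[X_\alpha, X_\beta]$ lies in the subgroup generated by the root subgroups $X_{i\alpha + j\beta}$ with $i, j \geq 1$ and $i\alpha + j\beta \in \Phi^+$. Since $\hgt(i\alpha + j\beta) = i \hgt(\alpha) + j \hgt(\beta) \geq \hgt(\alpha) + \hgt(\beta)$, this yields $[U^{(i)}, U^{(j)}] \subseteq U^{(i+j)}$, so $U^{(\bullet)}$ is a central series on $U$. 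Since the Coxeter number $h$ of $G$ satisfies $h \leq p$, every positive root has height at most $h - 1 \leq p - 1$, hence $U^{(p)} = 1$ and $U$ is nilpotent of class at most $p - 1$.

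For the second step, each root subgroup $X_\alpha \cong (\F_q, +)$ is elementary abelian of exponent $p$, so $U$ is generated by elements of order $p$. I would then invoke Philip Hall's theorem that a $p$-group of nilpotence class less than $p$ is regular, and in particular, when generated by elements of order $p$, has exponent $p$. A self-contained version uses the Hall--Petrescu collection formula $(ab)^p = a^p b^p \, c_2^{\binom{p}{2}} \cdots c_{p-1}^{\binom{p}{p-1}} c_p$ with $c_i \in \gamma_i(\langle a, b \rangle)$: the last factor vanishes because $\gamma_p(U) \subseteq U^{(p)} = 1$, while the intermediate exponents $\binom{p}{i}$ are divisible by $p$, so descending induction along the lower central series of $U$ handles the correction terms and reduces $(ab)^p = 1$ to the hypothesis $a^p = b^p = 1$.

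The main obstacle is the second step: nilpotence class less than $p$ does not by itself imply exponent $p$, and the deduction requires both the Hall--Petrescu identity and the divisibility of $\binom{p}{i}$ by $p$ for $1 \leq i \leq p - 1$, together with the order-$p$ property of the generators. The first step, by contrast, is essentially formal once one combines the Chevalley commutator relations with the definition of the Coxeter number.
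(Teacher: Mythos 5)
Your proposal is correct and follows essentially the same route as the paper: the height filtration of $U$ by root subgroups is a central series of length $h-1\leq p-1$ by the Chevalley commutator formula, and Hall's collection formula (the paper cites the version $(a_1\cdots a_r)^p=a_1^p\cdots a_r^pS_1^p\cdots S_\ell^p$ with $S_i\in[P,P]$, which is the same tool as your Hall--Petrescu identity) combined with an induction along the central series yields exponent $p$. The only cosmetic difference is that the paper runs the induction on the height filtration itself rather than the lower central series.
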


Since $U$ is generated by the positive root subgroups, their height function yields a filtration of $U$, with length $h-1$:
\[ U_{\hgt\geq k} = \langle X_\alpha | \alpha\in\Phi^+, \hgt\alpha\geq k\rangle. \]
The root subgroups $X_\alpha$ are abelian and satisfy the ``Chevalley commutator formula''
\[ [X_\alpha,X_\beta]\leq\langle X_{i\alpha+j\beta} | i,j>0 \rangle \]
for all $\alpha,\beta\in\Phi$ with $\alpha\neq\pm\beta$.
(Note that $X_\gamma$ should be interpreted as trivial when $\gamma\notin\Phi$.)
It follows that the above filtration of $U$ is a central series.  In particular, the nilpotence class of $U$ is at most $h-1$.

Using this central series, we show that, when the Coxeter number is at most $p$, every nontrivial element of $U$ has order $p$.  This follows from Hall's ``commutator collection'' trick~\cite[Cor. 12.3.1]{Hall}:

\begin{prop}[Hall]
Let $P$ be a $p$-group of nilpotence class less than $p$.  Then for $a_i\in P$, we have
\[ (a_1\cdots a_r)^p = a_1^pa_2^p\cdots a_r^pS_1^p\cdots S_\ell^p, \]
where $S_i\in[P,P]$.
\end{prop}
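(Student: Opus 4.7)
The plan is to derive the identity from Hall's collection process applied to the word $(a_1\cdots a_r)^p$, combined with the elementary divisibility fact that $p\mid\binom{p}{k}$ for $1<k<p$.

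First I would view $(a_1\cdots a_r)^p$ as a word of length $rp$ in the letters $a_1,\ldots,a_r$ and perform Hall collection: iteratively replace an adjacent pair $a_ja_i$ with $j>i$ by $a_ia_j[a_j,a_i]$, and then treat each newly produced commutator as a formal letter to be collected in its turn. Fixing a total order on basic commutators in $a_1,\ldots,a_r$ (the letters themselves first, then basic commutators of weight $2$, then weight $3$, and so on), the process terminates inside $P$ with an expression
\[ (a_1\cdots a_r)^p = a_1^p a_2^p\cdots a_r^p\cdot\prod_j c_j^{e_j}, \]
where each $c_j$ is a basic commutator of weight $w_j\geq 2$ in the $a_i$'s and each $e_j$ is a nonnegative integer.

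The crucial combinatorial input (Hall's theorem, as stated in the reference) is that each $e_j$ is a $\bbZ$-linear combination of binomial coefficients $\binom{p}{k}$ with $2\leq k\leq w_j$. The intuitive reason is that a basic commutator of weight $k$ collected from $(a_1\cdots a_r)^p$ arises by choosing $k$ of the $p$ blocks $(a_1\cdots a_r)$ to supply its constituent letters, contributing a factor of $\binom{p}{k}$. Carrying out this bookkeeping precisely, by induction on the depth of collection and using the Jacobi/Hall identities to rewrite nested commutators in the basic basis, is the main obstacle; it is exactly the content of Hall's Collection Formula.

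Granting this, the conclusion is short. Since $P$ has nilpotence class less than $p$, every iterated commutator of weight $\geq p$ vanishes, so only terms with $2\leq w_j\leq p-1$ survive in the product. For each such weight, every $\binom{p}{k}$ appearing in the expression for $e_j$ is divisible by $p$; hence $p\mid e_j$. Writing $e_j=pf_j$ and setting $S_j=c_j^{f_j}$, each $S_j$ lies in $[P,P]$ (since $c_j$ is a commutator of weight $\geq 2$), and rearranging gives
\[ (a_1\cdots a_r)^p = a_1^pa_2^p\cdots a_r^p\,S_1^p\cdots S_\ell^p, \]
as required.
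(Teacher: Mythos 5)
The paper does not actually prove this proposition; it quotes it from Hall's book (the cited [Cor.~12.3.1]), so there is no internal argument to compare against. Your sketch is the standard derivation of that corollary from Hall's collection formula, and the deduction you give is correct: collection yields $(a_1\cdots a_r)^p=a_1^p\cdots a_r^p\prod_j c_j^{e_j}$ with the $c_j$ basic commutators of weight $w_j\geq 2$; the class-$<p$ hypothesis kills all weights $\geq p$ (and guarantees the process terminates); the exponent of a weight-$w_j$ commutator is an $\bbN$-linear combination of $\binom{p}{k}$ with $k\leq w_j\leq p-1$, each of which is divisible by $p$; and your term-by-term rewriting $c_j^{e_j}=\big(c_j^{f_j}\big)^p$ with $S_j=c_j^{f_j}\in[P,P]$ respects the ordering of the (noncommutative) product, so the final identity follows. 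The one ingredient you explicitly grant rather than prove --- that the exponents have the binomial form $\sum_k b_k\binom{p}{k}$ with $b_k$ independent of the exponent $p$ and $k$ bounded by the weight --- is precisely Hall's Theorem~12.3.1, i.e., the same external input the paper itself leans on; whether the sum starts at $k=1$ or $k=2$ is immaterial since $p\mid\binom{p}{1}$ as well. So your proposal is correct at the same level of rigor as the paper's citation, and is the intended route.
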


\begin{cor}\label{expcor}
Let $P$ have a central series $1=P_0\leq\cdots\leq P_k=P$ of length $k<p$.  Suppose each $P_i$ is generated by elements of order $p$.  Then $P$ has exponent $p$.
\end{cor}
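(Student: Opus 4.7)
The plan is to induct on the length $k$ of the central series, with Hall's commutator collection formula (the immediately preceding proposition) as the essential input. The base case $k=0$ is trivial since then $P=1$. For the inductive step, I assume the statement for central series of length $k-1$ and establish it for length $k<p$.

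First, the truncated series $1=P_0\leq P_1\leq\cdots\leq P_{k-1}$ still satisfies all hypotheses (central series of length $k-1<p$, each term generated by elements of order $p$), so by induction $P_{k-1}$ has exponent $p$. Next, because the full series $1=P_0\leq\cdots\leq P_k=P$ is central of length $k$, the group $P$ has nilpotence class at most $k<p$, so Hall's proposition applies to $P$. Moreover, centrality of the series gives $[P,P]=[P_k,P_k]\leq P_{k-1}$.

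Now let $x\in P$ be arbitrary. Since $P=P_k$ is generated by elements of order $p$ by hypothesis, I may write $x=a_1a_2\cdots a_r$ with each $a_i^p=1$. Applying Hall's proposition yields
\[ x^p = (a_1\cdots a_r)^p = a_1^p\cdots a_r^p\cdot S_1^p\cdots S_\ell^p, \]
where each $S_j\in[P,P]\leq P_{k-1}$. Each $a_i^p$ is trivial by choice of the $a_i$, and each $S_j^p$ is trivial because $P_{k-1}$ has exponent $p$ by the inductive hypothesis. Therefore $x^p=1$, completing the induction.

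The only point requiring care is verifying the two conditions needed to invoke Hall's proposition and then dispose of the error terms: namely, that the nilpotence class of $P$ is strictly less than $p$, and that the commutators $S_j$ produced by the collection process actually lie in $[P,P]$ (and hence in $P_{k-1}$). Both facts are immediate from the definition of a central series, so I anticipate no real obstacle; the argument is essentially a clean application of Hall's formula once the inductive framework is in place.
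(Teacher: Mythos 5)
Your proof is correct and follows essentially the same route as the paper: induct on the length $k$, note that $P$ has nilpotence class at most $k<p$ so Hall's proposition applies, write each element as a product of generators of order $p$, and kill the error terms $S_j^p$ using $[P,P]\leq P_{k-1}$ together with the inductive hypothesis. The paper's proof is just a terser version of the same argument.
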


\begin{proof}
We use induction on $k$.  All elements of $P$ can be written as a product $a_1\cdots a_r$ with $a_i^p=1$.  Applying the proposition, we see that it suffices to show that $S^p=1$ for all $S\in[P,P]\leq P_{k-1}$.  This is true because $P_{k-1}$ satisfies the induction hypothesis.
\end{proof}

\begin{proof}[Proof of Proposition~\ref{exponent}]
Each level of the height filtration for $U$ is generated by root subgroups, each of which has exponent $p$.  Also the maximum height is $h-1\leq p-1$.  Hence the conditions of Corollary~\ref{expcor} are satisfied, showing that $U$ has exponent $p$.
\end{proof}

\end{document}